\newcommand{\rr}[2]{$\displaystyle{\phantom{\bigg|}\!\!\!{#1\atop (#2\%)}\phantom{\bigg|}\!\!\!}$}
\newtheorem*{VDW}{van der Waerden's Theorem}
\newtheorem{theorem}{Theorem}
\newtheorem{corollary}[theorem]{Corollary}
\newtheorem{lemma}[theorem]{Lemma}
\newtheorem{question}[theorem]{Question}
\newtheorem{conjecture}[theorem]{Conjecture}
\begin{document}
\title{Unrolling residues to avoid progressions}
\author{Steve Butler\footnote{Department of Mathematics, Iowa State University, Ames, IA  50011, USA ({\tt butler@iastate.edu})} \and Ron Graham\footnote{Department of Computer Science and Engineering, University of California San Diego, La Jolla, CA 92093, USA ({\tt graham@ucsd.edu})} \and Linyuan Lu\footnote{Department of Mathematics, University of South Carolina, Columbia, SC 29208, USA ({\tt lu@math.sc.edu})}}
\date{\empty}
\maketitle

\begin{abstract}
We consider the problem of coloring $[n]=\{1,2,\ldots,n\}$ with $r$ colors to minimize the number of monochromatic $k$ term arithmetic progressions (or $k$-APs for short).  We show how to extend colorings of $\mathbb{Z}_m$ which avoid nontrivial $k$-APs to colorings of $[n]$ by an unrolling process.  In particular, by using residues to color $\mathbb{Z}_m$ we produce the best known colorings for minimizing the number of monochromatic $k$-APs for coloring with $r$ colors for several small values of $r$ and $k$.
\end{abstract}

Ramsey Theory is sometimes described by the maxim that complete disorder is impossible.  A well known example of this phenomenon is that among any six people, three of them must be mutual friends or three of them must be complete strangers.  If we increase the number of people to eighteen then there will be either four mutual friends or four complete strangers.  In these cases the problem relates to colorings of the edges of the complete graph with two colors and showing that there must be some small clique which is monochromatic.

We are going to focus on a Ramsey Theory problem related to coloring $[n]=\{1,2,\ldots,n\}$.  Instead of looking for a clique which is monochromatic now we are interested in finding a short arithmetic progression which is monochromatic.  An arithmetic progression of length $k$, sometimes abbreviated as a $k$-AP, is a sequence of $k$ numbers of the form
\[
a,a+d,a+2d,\ldots,a+(k-1)d.
\]
We say that a $k$-AP is monochromatic if all of the numbers in the sequence were assigned the same color.

\begin{VDW}[see \cite{LR,vdw}]
For any $r$ and $k$ there is an $N$ so that if $n\ge N$ then for \emph{any} coloring of $[n]$ using $r$ colors there must be an arithmetic progression of length $k$ which is monochromatic.
\end{VDW}

This shows that there must be at least one monochromatic arithmetic progression when our $n$ is sufficiently large.  Frankl, Graham and R\"odl \cite{FGR} were able to extend this result to establish an even stronger statement, namely that there must be many.

\begin{corollary}\label{dense}
For any $r$ and $k$ there is a $\gamma >0$ so that the number of monochromatic $k$-APs in any $r$-coloring of $[n]$ is at least $\big(\gamma+o(1)\big)n^2$.
\end{corollary}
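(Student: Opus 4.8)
The plan is to derive the corollary from van der Waerden's Theorem by a supersaturation (averaging) argument. Fix $r$ and $k$, and let $N=N(r,k)$ be an integer such that every $r$-coloring of $[N]$ contains a monochromatic $k$-AP (such an $N$ exists by van der Waerden's Theorem). Since any $N$-term arithmetic progression of integers is the image of $[N]$ under an affine map $i\mapsto a+id$, which carries arithmetic progressions to arithmetic progressions, the same conclusion holds for every $r$-coloring of an $N$-term arithmetic progression. Now fix an arbitrary $r$-coloring of $[n]$ and let $\mathcal{P}$ be the family of all $N$-term APs contained in $[n]$, that is, all sets $\{a,a+d,\dots,a+(N-1)d\}$ with $a,d\ge 1$ and $a+(N-1)d\le n$. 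Counting by common difference, $|\mathcal{P}|=\sum_{d\ge 1}\max\{0,\,n-(N-1)d\}=\big(\tfrac1{2(N-1)}+o(1)\big)n^{2}$, and by the choice of $N$ each $P\in\mathcal{P}$ contains at least one nontrivial monochromatic $k$-AP lying inside $[n]$.

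Next I would double count the pairs $(P,Q)$ with $P\in\mathcal{P}$ and $Q$ a monochromatic $k$-AP satisfying $Q\subseteq P$. From the previous paragraph the number of such pairs is at least $|\mathcal{P}|=\big(\tfrac1{2(N-1)}+o(1)\big)n^{2}$. For the reverse direction I claim that any fixed $k$-AP $Q$ lies in at most $N^{2}$ members of $\mathcal{P}$. Write $Q=\{b,b+e,\dots,b+(k-1)e\}$ with $e\ge 1$, and suppose $Q\subseteq P$ for $P=\{a+id:0\le i\le N-1\}$. The indices within $P$ of the elements of $Q$ again form an arithmetic progression, now with common difference $t:=e/d$; hence $d\mid e$ and $(k-1)t\le N-1$, so the cofactors $t$ that occur are distinct integers in $[1,(N-1)/(k-1)]$ and there are at most $(N-1)/(k-1)$ possibilities for $d$. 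Once $d$ (hence $t$) is fixed, the only remaining freedom is the index $i_{0}\in\{0,1,\dots,N-1-(k-1)t\}$ of the first element of $Q$ within $P$, which then determines $a$; so at most $N$ choices remain. Thus at most $N\cdot(N-1)/(k-1)\le N^{2}$ progressions in $\mathcal{P}$ contain $Q$.

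Combining the two estimates, the number of monochromatic $k$-APs in the given coloring is at least $\big(\tfrac1{2(N-1)}+o(1)\big)n^{2}\big/N^{2}=\big(\gamma+o(1)\big)n^{2}$ with $\gamma=\tfrac1{2N^{2}(N-1)}>0$; I would make no attempt to optimize $\gamma$, since the statement only asserts its positivity.

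I expect the one step that genuinely needs care to be the per-$Q$ bound in the double count: an arbitrary $e$ may of course have many divisors, so it is essential to observe that only divisors $d$ of $e$ with $e/d\le(N-1)/(k-1)$ can arise, and that such $d$ are controlled by their (distinct and bounded) cofactors rather than by the divisor function of $e$. The remaining points — the asymptotics of $|\mathcal{P}|$ including the boundary terms hidden in the $o(1)$, and the observation that the $k$-APs produced by van der Waerden's Theorem automatically have nonzero common difference — are routine.
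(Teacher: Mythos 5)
Your argument is correct. Note that the paper itself does not prove this corollary --- it is quoted from Frankl, Graham and R\"odl \cite{FGR} --- so there is no in-paper proof to compare against; what you have written is the standard Varnavides-type supersaturation argument, deducing the quantitative statement from van der Waerden's Theorem by averaging over all $N$-term arithmetic progressions in $[n]$, and it is a perfectly valid self-contained proof of the corollary (though \cite{FGR} proves a more general statement about regular systems of equations). The one step you flagged as delicate, the bound of $N^{2}$ on the number of members of $\mathcal{P}$ containing a fixed $k$-AP $Q$, is handled correctly: parametrizing the admissible common differences $d$ by their cofactors $t=e/d\le (N-1)/(k-1)$ sidesteps the divisor function of $e$ entirely, and once $d$ is fixed the only remaining freedom is the position of $Q$ inside $P$, of which there are at most $N$ choices. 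Combined with $|\mathcal{P}|=\big(\tfrac{1}{2(N-1)}+o(1)\big)n^{2}$ and the observation that the progressions produced by van der Waerden's Theorem are nontrivial (so they are genuinely among the $k$-APs being counted), this gives $\gamma=\tfrac{1}{2N^{2}(N-1)}>0$, which is all the statement requires.
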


[In this corollary and throughout the paper we will use little-``$o$'' and big-``O'' notation which are used to bound the size of the lower order terms.  This is a convenient tool which allows us to focus on the leading terms.  An introduction to this notation can be found in many places including Graham, Knuth and Patashnik \cite{GKP}.]

To put the result of Corollary~\ref{dense} in perspective, the total number of $k$-APs in $[n]$ is ${1\over 2(k-1)}n^2+O(n)$ showing that a positive proportion of these arithmetic progressions must be monochromatic.  One way to see the count for the number of $k$-APs is to note that there is a $k$-AP between any two points if and only if the difference between them is a multiple of $k-1$.  There are ${n(n+1)\over 2}$ ways to pick two points and with probability ${1\over k-1}$ the difference is divisible by $k-1$.  The $O(n)$ term handles any rounding errors that might occur due to the value of $n\pmod{k-1}$.  

This leads to the following open question.

\begin{question}
Given $k$ and $r$ what is the largest $\gamma$ so that for \emph{any} coloring of $[n]$ there are at least $\big(\gamma+o(1)\big)n^2$ monochromatic $k$-APs?  Or, equivalently, what is the smallest $\gamma$ so that for \emph{some} coloring of $[n]$ there are at most $\big(\gamma+o(1)\big)n^2$ monochromatic $k$-APs?
\end{question}

Our goal in this paper is to partially address this question for small values of $r$ and $k$ by exhibiting colorings of $[n]$ which have a small value of $\gamma$.  A natural first guess for the value of $\gamma$ is to color ``randomly''.  Or in other words we pick a typical coloring and consider how many monochromatic $k$-APs should the coloring be expected to contain.  This bound can be easily computed by counting.

\begin{lemma}\label{lem:random}
There is a coloring of $[n]$ with $r$ colors which has at most ${1\over 2(k-1)r^{k-1}}n^2+O(n)$ monochromatic $k$-APs.
\end{lemma}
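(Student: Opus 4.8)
The plan is to use the first moment method, i.e.\ a simple averaging argument. I would color each element of $[n]$ independently and uniformly at random with one of the $r$ colors. Fix a nontrivial $k$-AP $a,a+d,\ldots,a+(k-1)d$ (so $d\neq 0$); its $k$ terms are then genuinely distinct integers in $[n]$. The probability that all $k$ of these terms receive the same color is exactly $r\cdot(1/r)^k=1/r^{k-1}$: there are $r$ choices for the common color, and each of the $k$ distinct terms must independently be assigned that color. By linearity of expectation, the expected number of monochromatic $k$-APs in a random $r$-coloring equals the number of $k$-APs in $[n]$ multiplied by $1/r^{k-1}$.

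Next I would invoke the count of $k$-APs in $[n]$ recorded just after Corollary~\ref{dense}, namely ${1\over 2(k-1)}n^2+O(n)$. Multiplying by $1/r^{k-1}$ shows that the expected number of monochromatic $k$-APs is ${1\over 2(k-1)r^{k-1}}n^2+O(n)$, since scaling the $O(n)$ error by the fixed constant $1/r^{k-1}$ keeps it $O(n)$. Because a real-valued random variable is at most its expectation with positive probability, there must exist at least one $r$-coloring of $[n]$ achieving at most this many monochromatic $k$-APs, which is precisely the claimed bound.

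There is essentially no hard step here; the only things needing a little care are (i) checking that the $k$ terms of each AP really are distinct, so that the probability computation $1/r^{k-1}$ is correct (this is where we use $d\neq 0$ together with working over $[n]\subseteq\mathbb{Z}$), and (ii) making sure the bookkeeping on the lower-order term is clean. If one prefers to avoid probabilistic language entirely, the same conclusion follows by summing the number of monochromatic $k$-APs over all $r^n$ colorings of $[n]$, swapping the order of summation to count, for each $k$-AP, the $r\cdot r^{n-k}$ colorings making it monochromatic, and then dividing by $r^n$: some coloring does no worse than this average.
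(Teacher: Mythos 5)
Your proposal is correct and is essentially the paper's own argument: the paper computes the average number of monochromatic $k$-APs over all $r^n$ colorings by noting each fixed $k$-AP is monochromatic in $r\cdot r^{n-k}$ colorings, which is exactly the deterministic rephrasing you give at the end (and is equivalent to your expectation computation). No substantive difference.
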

\begin{proof}
To establish this we will simply count the total number of monochromatic $k$-APs over all colorings and then take the average (i.e., divide by $r^n$, the number of colorings).  Some coloring must have at most the average number which will then give us the bound.

Fix the location inside of $[n]$ of a $k$-AP.  There are $r$ ways to color that $k$-AP monochromatically and we can extend each such coloring to a coloring of $[n]$ by $r^{n-k}$ ways.  Therefore each fixed $k$-AP will contribute $r^{n-k+1}$ to the sum total number of monochromatic $k$-APs over all possible colorings.

On the other hand we have already counted the number of $k$-APs in $[n]$ so that the total number of monochromatic $k$-APs over all possible colorings is
\[
\bigg({1\over 2(k-1)}n^2+O(n)\bigg)r^{n-k+1}.
\]
Dividing this by $r^n$ to get the average number of $k$-APs now establishes the result.
\end{proof}

The random bound was the best known bound for any $r$ and $k$ until several years ago when two groups independently found the same way to color $[n]$ that was an improvement in the case of $2$ coloring $[n]$ to avoid $3$-APs (where the random bound gives ${1\over 16}n^2+O(n)$).

\begin{theorem}[Parrilo-Robertson-Saracino \cite{PRS}; Butler-Costello-Graham \cite{BCG}]
Divide $[n]$ into twelve blocks with relative sizes 
\[
28,6,28,37,59,116,116,59,37,28,6,28
\]
and color each block a solid color where we alternate the colors between the blocks.  Then the number of monochromatic three term arithmetic progressions in such a coloring is
\[
{117\over 2192}n^2+O(n)={117\over137}{\cdot}{1\over 16}n^2+O(n).
\]
\end{theorem}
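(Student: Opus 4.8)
The plan is to pass to a continuous model and reduce everything to a single double integral. Rescale $[n]$ to $[0,1]$ and let $f\colon[0,1]\to\{-1,+1\}$ be the step function equal to $+1$ on the odd-numbered blocks and $-1$ on the even-numbered ones, with breakpoints at the scaled partial sums $0=t_0<t_1<\cdots<t_{12}=1$ of the list $28,6,28,37,59,116,116,59,37,28,6,28$ (whose total is $548$, and whose odd-indexed and even-indexed entries each sum to $274$). The block boundaries actually used in $[n]$ differ from $nt_i$ by $O(1)$, and moving one boundary by $O(1)$ changes the colour of $O(1)$ integers, each lying in $O(n)$ three-term progressions; so the integer count and the continuous-model count agree up to an additive $O(n)$, and it suffices to work with $f$.

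The next step is the standard pointwise identity: for a $3$-AP $x,y,z$ the indicator that it is monochromatic equals $\tfrac14\bigl(1+f(x)f(y)+f(x)f(z)+f(y)f(z)\bigr)$ (check the cases $f(x)=f(y)=f(z)$ and ``exactly one disagreement''). Summing over all $3$-APs with positive common difference, and writing $M$ for the number of monochromatic ones and $T,S_{12},S_{13},S_{23}$ for the sums of $1$, $f(x)f(y)$, $f(x)f(z)$, $f(y)f(z)$ respectively, we get $M=\tfrac14\bigl(T+S_{12}+S_{13}+S_{23}\bigr)$. Here $T=\tfrac14 n^2+O(n)$ by the count in the introduction; the reversal $x,y,z\mapsto z,y,x$ is a bijection of $3$-APs, so $S_{12}=S_{23}$; and since $z-x$ is even, grouping by parity of position gives $S_{13}=\tfrac12\bigl[(\sum_{x\text{ odd}}f(x))^2+(\sum_{x\text{ even}}f(x))^2-n\bigr]$, where each of the first two sums is $O(1)$ because every colour class, and every residue class mod $2$, is balanced up to $O(1)$ (this is where $274+274$ gets used). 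Hence $S_{13}=O(n)$ drops out and $M=\tfrac{n^2}{16}+\tfrac12 S_{12}+O(n)$.

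It remains to evaluate $S_{12}$, the sum of $f(x)f(y)$ over $3$-APs $x,y,z$ of positive common difference with $y$ the middle term. Such progressions biject with integer pairs $(x,y)$, $x<y$, $2y-x\le n$, and the number of these with $(x/n,y/n)$ in a cell is $n^2$ times the area of the cell up to boundary error, so
\[
S_{12}=n^2\iint_{R} f(\xi)f(\eta)\,d\xi\,d\eta+O(n),\qquad R=\Bigl\{(\xi,\eta):0\le\xi<\eta\le\tfrac{1+\xi}{2}\Bigr\}.
\]
Writing $F(\xi)=\int_0^\xi f$, which is continuous, piecewise linear, and has $F(0)=F(1)=0$, the inner integral over $\eta$ is $F(\tfrac{1+\xi}{2})-F(\xi)$; since $\int_0^1 f(\xi)F(\xi)\,d\xi=\tfrac12F(1)^2=0$, this simplifies to $\iint_{R} f(\xi)f(\eta)\,d\xi\,d\eta=\int_0^1 f(\xi)\,F(\tfrac{1+\xi}{2})\,d\xi$.

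The final step is to evaluate this integral for the given $f$. The zigzag $F$ takes the values $\tfrac1{548}(0,28,22,50,13,72,-44,72,13,50,22,28,0)$ at the thirteen breakpoints, and the integral breaks into a finite, signed sum of integrals of a linear function over subintervals on which $f$ is constant — in effect a sum of a few dozen trapezoid areas. Carrying this out gives $\iint_{R} f(\xi)f(\eta)\,d\xi\,d\eta=-\tfrac{5}{274}$, whence $S_{12}=-\tfrac{5}{274}n^2+O(n)$ and
\[
M=\frac{n^2}{16}-\frac{5}{548}n^2+O(n)=\frac{137-20}{2192}\,n^2+O(n)=\frac{117}{2192}\,n^2+O(n)=\frac{117}{137}\cdot\frac1{16}\,n^2+O(n).
\]
I expect this last evaluation to be the only laborious part: the breakpoints of $F$ and the midpoint map $\xi\mapsto\tfrac{1+\xi}{2}$ do not line up, so $R$ must be dissected into many pieces whose areas are tallied with care, whereas everything preceding it is routine. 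It is also this computation that explains why these particular, rather lopsided, block ratios are the ones that produce the constant $117/137$.
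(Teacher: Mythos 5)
The paper states this theorem without proof --- it is imported from Parrilo--Robertson--Saracino \cite{PRS} and Butler--Costello--Graham \cite{BCG} --- so there is no internal proof to compare against; judged on its own, your argument is correct and complete. The reduction is sound at every step: the correlation identity $\tfrac14(1+f(x)f(y)+f(x)f(z)+f(y)f(z))$, the symmetry $S_{12}=S_{23}$, and the elimination of $S_{13}$ via the parity sums (which is legitimate precisely because each block contributes half its length to each residue class mod $2$ up to $O(1)$, and the signed block lengths cancel since $274=274$). I also verified the one laborious step. Writing $G=548F$, so that $G$ takes the values $0,28,22,50,13,72,-44,72,13,50,22,28,0$ at the breakpoints $0,28,34,62,99,158,274,390,449,486,514,520,548$, the substitution $w=(548+u)/2$ gives
\[
\iint_R f(\xi)f(\eta)\,d\xi\,d\eta=\frac{2}{548^2}\int_{274}^{548}g(2w-548)\,G(w)\,dw,
\]
and the common refinement of the two partitions of $[274,548]$ has seventeen cells, each contributing a signed trapezoid; their total is $-2740$, so the integral equals $-5480/548^2=-5/274$, and indeed $\tfrac1{16}-\tfrac5{548}=\tfrac{137-20}{2192}=\tfrac{117}{2192}$. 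For comparison with the cited sources: both \cite{PRS} and \cite{BCG} compute the monochromatic $3$-AP count of a block coloring as a quadratic form in the block proportions (which is how the optimal ratios were found in the first place), so your continuous-limit computation is the same calculation organized through the single integral $\int_0^1 f(\xi)F\bigl(\tfrac{1+\xi}{2}\bigr)\,d\xi$; this packaging is arguably cleaner for verifying one given coloring, though it hides the quadratic form one would want to optimize over.
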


An example of the resulting coloring, where we use red and blue, is shown in Figure~\ref{fig:3ap}.

\begin{figure}
\centering
\includegraphics{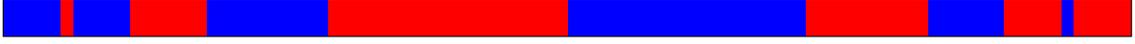}
\caption{The best known way to color $[n]$ with $2$ colors to avoid $3$-APs.}
\label{fig:3ap}
\end{figure}

\begin{conjecture}
Each coloring of $[n]$ with two colors has at least ${117\over 2192}n^2+O(n)$ monochromatic three term arithmetic progressions.
\end{conjecture}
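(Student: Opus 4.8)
\noindent\emph{A possible approach.}
Represent a $2$-coloring of $[n]$ by a sign vector $\sigma\in\{-1,+1\}^n$. For $x,y,z\in\{-1,+1\}$ one has $\mathbf{1}[x=y=z]=\frac14(1+xy+yz+zx)$, so summing over all $3$-APs $\{a,a+d,a+2d\}\subseteq[n]$ the number of monochromatic $3$-APs equals
\[
\frac14\Bigl(N_3+\sum_{a,d}\bigl(\sigma_a\sigma_{a+d}+\sigma_{a+d}\sigma_{a+2d}+\sigma_a\sigma_{a+2d}\bigr)\Bigr)=\frac14\bigl(N_3+\sigma^{\top}M_n\sigma\bigr),
\]
where $N_3=\frac14n^2+O(n)$ is the number of $3$-APs and $M_n$ is a fixed symmetric matrix with zero diagonal, built from three ``Toeplitz'' pieces (coming from the differences $d$, $d$, and $2d$) together with boundary corrections forcing the whole progression into $[n]$. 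Together with the matching upper bound from the twelve-block coloring of the theorem above (whose count is $\frac{117}{2192}n^2+O(n)$), the conjecture amounts to the statement $\min_{\sigma\in\{-1,+1\}^n}\frac14(N_3+\sigma^{\top}M_n\sigma)=\frac{117}{2192}n^2+o(n^2)$; only the lower bound is at issue.

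The plan is to bound this combinatorial minimum from below by a tractable relaxation and then show the relaxation is asymptotically tight. The box relaxation $\sigma\in[-1,1]^n$ is useless here: since $M_n$ has zero diagonal, $\sigma^{\top}M_n\sigma$ is affine in each coordinate and its minimum over the cube is already attained at a $\pm1$ vertex. I would instead use the semidefinite relaxation $\min\{\langle M_n,X\rangle:X\succeq0,\ X_{aa}=1\}$. Because $M_n$ is translation-invariant away from the boundary, one expects --- after quantifying the boundary effects, which is where the ``unrolling'' viewpoint of this paper naturally enters --- that the rescaled semidefinite value converges to the minimum of a linear functional over nonnegative power spectra (equivalently, probability measures on the circle), an infinite-dimensional linear program. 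By duality its optimizer is supported at the finitely many minima of the dual symbol, so optimal configurations are superpositions of a few Fourier characters, and rounding these to $\pm1$-valued sequences is exactly what produces the residue-type colorings studied here. Completing the lower bound this way amounts to evaluating that linear program and exhibiting a dual certificate of value $\frac{117}{2192}n^2$ for which the twelve-block coloring is primal-feasible with matching objective.

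The main obstacle is proving that the relaxation is asymptotically tight: that no coloring --- in particular none mixing several length scales, or using intermediate local densities together with carefully engineered fine-scale correlations --- beats the twelve-block value. This is precisely the gap between ``the best structured coloring'' and ``the best coloring'', and at present nothing is known to close it; the semidefinite bound is valid but is not known to reach $\frac{117}{2192}n^2$, and whether it, or some fixed level of the sum-of-squares hierarchy above it, does so is exactly what is open. Two further steps remain even granting tightness. First, one must reduce the limiting variational problem to a step function with a bounded number of blocks: there is no off-the-shelf regularity theory forcing a minimizer to be piecewise constant, let alone to have at most twelve pieces of alternating color, and controlling where the first-order complementarity condition degenerates looks delicate. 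Second, one must certify that the resulting non-convex polynomial in the block boundaries attains its global minimum $\frac{117}{2192}$ exactly at the ratios $28:6:28:37:59:116:116:59:37:28:6:28$; checking the stationarity equations in exact arithmetic is routine, but a rigorous global-optimality certificate, such as a sum-of-squares (Positivstellensatz) representation on the simplex, need not exist at low degree.
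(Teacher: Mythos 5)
The statement you are asked about is a \emph{conjecture}: the paper offers no proof of it, and none is known. The paper itself records the state of the art immediately before its closing section: the best known lower bound, due to Parrilo, Robertson and Saracino, is ${1675\over 32768}n^2+o(n^2)\approx 0.05111\,n^2$, which falls short of the conjectured ${117\over 2192}n^2\approx 0.05338\,n^2$. Your proposal does not close this gap, and to your credit you say so explicitly. The setup is sound --- the identity $\mathbf{1}[x=y=z]=\frac14(1+xy+yz+zx)$ is correct, the reduction to minimizing a zero-diagonal quadratic form over $\{-1,+1\}^n$ is correct, and your observation that the box relaxation is vacuous (the form is multilinear, so the minimum over $[-1,1]^n$ sits at a vertex) is a genuine and relevant point. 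The passage to a semidefinite or Fourier-analytic relaxation is essentially the route that produces the known $1675/32768$ bound, so what you have written is a valid derivation of \emph{a} lower bound, not of the conjectured one.

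The genuine gap is the one you name yourself: proving that the relaxation (or any fixed level of a hierarchy above it) is asymptotically tight at $117/2192$ is precisely the open problem, not a technical step that can be deferred. There is a structural reason to expect difficulty: the conjectured extremal coloring is a twelve-block step function, which is far from the ``superposition of a few Fourier characters'' that your duality heuristic predicts for optimizers of the relaxed problem, so the relaxation and the combinatorial optimum live in rather different corners of configuration space and there is no a priori reason their values coincide. The two further steps you list (reducing the limiting variational problem to boundedly many blocks, and globally certifying the ratios $28:6:28:37:59:116:116:59:37:28:6:28$) are also open. In short: this is an honest and well-informed research plan, but it is not a proof, and the paper contains none to compare it against.
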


Butler, Costello and Graham \cite{BCG} used a computer search to look for similar ways to color $[n]$ by subdividing into blocks to avoid $4$-APs and $5$-APs.  For avoiding $4$-APs they found a block coloring with $36$ blocks that had $0.017221n^2+O(n)$ monochromatic $4$-APs; and a block coloring with $113$ blocks that had $0.005719n^2+O(n)$ monochromatic $5$-APs.

After this first sign of progress on the problem in thirty years, it only took six months for a better alternative to block coloring to be discovered that consisted of unrolling a coloring of $\mathbb{Z}_m$.

\subsection*{Unrolling colorings of $\mathbb Z_m$}
In addition to the problem of minimizing the number of monochromatic $k$-APs in $[n]$, one can also look at minimizing the number of monochromatic $k$-APs in $\mathbb Z_n$ (the integers modulo $n$).  In this setting the problem of avoiding $3$-APs has a surprising result.  Namely, Cameron, Cilleruelo and Serra \cite {CCS} showed that for avoiding $3$-APs when two colors are involved, it does not matter how you color the elements, only how much of each color is used.

For $4$-APs the way that $\mathbb{Z}_n$ is colored makes a significant difference.  The current best result for $4$-APs was found by Lu and Peng \cite{LP} who repeatedly copied the coloring shown in Figure~\ref{fig:Z11} of $\mathbb{Z}_{11}$ to color $\mathbb{Z}_n$.  This coloring of $\mathbb{Z}_{11}$ has no nontrivial monochromatic $4$-AP regardless of how the red bit in Figure~\ref{fig:Z11} is colored (an example of a trivial $4$-AP is something of the form $a,a,a,a$).  Because there are no nontrivial $4$-APs, this forced any $4$-AP in their coloring of $\mathbb{Z}_n$ to have large gaps, and hence few monochromatic $4$-APs.

\begin{figure}[h]
\centering
\includegraphics[scale=1]{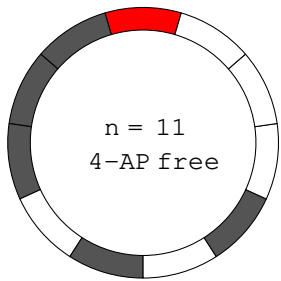}
\caption{A coloring of $Z_{11}$ with no nontrivial monochromatic $4$-AP; the red bit can be either color.}
\label{fig:Z11}
\end{figure}

Lu and Peng then observed that this same approach of coloring to force the $4$-APs to have large gaps in $\mathbb{Z}_n$ could also be done for $[n]$.  This is done by \emph{unrolling} the coloring of $\mathbb Z_{11}$, i.e., by placing repeated copies of the coloring of $\mathbb{Z}_{11}$ (see Figure~\ref{fig:unroll}).  Any monochromatic $4$-AP in $[n]$ would correspond to a monochromatic $4$-AP in $\mathbb{Z}_{11}$ which must be trivial, forcing the monochromatic $4$-APs to take steps that are multiples of $11$.

\begin{figure}[b]
\centering
\includegraphics[scale=0.85]{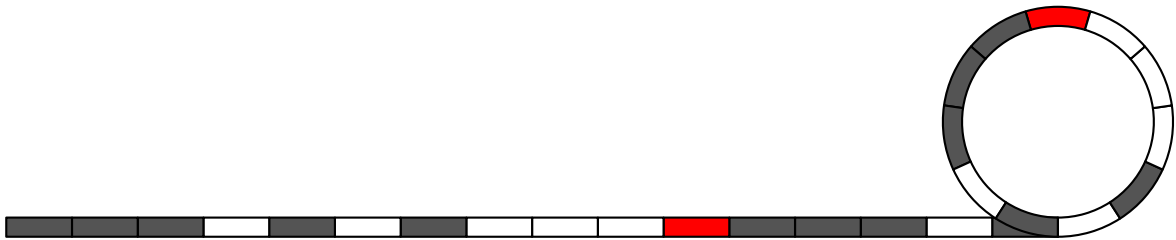}
\caption{Unrolling the coloring of $Z_{11}$ to find a coloring of $[n]$}
\label{fig:unroll}
\end{figure}

There is further improvement in that we still have freedom to color the red squares that showed up in the unrolling arbitrarily.  To do this we simply recurse and unroll along the red squares the same pattern we did initially.  This is much easier to do than at first glance, namely the coloring is equivalent to doing the following:  Given $\ell$  write it in base $11$, i.e., 
\[
\ell=\sum_ib_i11^i\qquad\mbox{where }0\le b_i\le 10,
\]
then if $j$ is the smallest index so that $b_j\ne 0$,
\[
\mbox{color $\ell$ }\left\{
\begin{array}{l@{\quad}l}
\mbox{white}&\mbox{if }b_j\in\{1,3,4,5,9\};\\
\mbox{black}&\mbox{if }b_j\in\{2,6,7,8,10\}.
\end{array}\right.
\]

It will follow from Theorem~\ref{thm:unroll} below that this coloring has
\[
{1\over 72}n^2+O(n)\approx 0.013888n^2 < 0.020833n^2\approx{1\over 48}n^2+O(n).
\]
This coloring is far superior to the block coloring referred to earlier, and also far easier to describe and to extend (indeed we can color all of $\mathbb{N}$ and then just take the first $n$ terms with this coloring; in a block coloring such as in Figure~\ref{fig:3ap} this would be impossible).  

This idea of unrolling works in general for any coloring of $\mathbb{Z}_m$ which avoids nontrivial $k$-APs.  Further, counting the number of monochromatic $k$-APs can be easily done.

\begin{theorem}\label{thm:unroll}
If there is an $r$ coloring of $\mathbb Z_m$ which avoids nontrivial $k$-APs, then there is an $r$ coloring of $[n]$ which has 
\[
{1\over 2m(k-1)}n^2+O(n)
\]
monochromatic $k$-APs.

If there are $r$ different $r$ colorings of $Z_m$ which avoid nontrivial $k$-APs and these differ only in the coloring of $0$, then there is an $r$ coloring of $[n]$ which has 
\[
{1\over 2(m+1)(k-1)}n^2+O(n)
\]
monochromatic $k$-APs.
\end{theorem}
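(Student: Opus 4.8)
The plan is to treat the two statements in sequence, using the first as a warm-up for the second. For the first statement I would color all of $\mathbb{N}$ by giving $\ell$ the color that the given coloring of $\mathbb{Z}_m$ assigns to $\ell \bmod m$, and then restrict to $[n]$. The key point is an exact characterization: a $k$-AP $a,a+d,\dots,a+(k-1)d$ in $[n]$ is monochromatic if and only if $m\mid d$. Reducing such an AP mod $m$ gives a $k$-AP in $\mathbb{Z}_m$ carrying the same colors on its terms, so if it is monochromatic in $[n]$ it is a monochromatic $k$-AP in $\mathbb{Z}_m$, hence trivial by hypothesis, i.e.\ $d\equiv0\pmod m$; conversely if $m\mid d$ then all $k$ terms are congruent mod $m$ and so share a color. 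It then remains to count the pairs $(a,d)$ with $d\ge1$, $m\mid d$, and $a+(k-1)d\le n$; writing $d=mt$ and summing $n-(k-1)mt$ over $t\ge1$ gives $\frac{1}{2m(k-1)}n^2+O(n)$.

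For the second statement, write $\chi$ for the coloring that all of the given colorings induce on $\mathbb{Z}_m\setminus\{0\}$, and for each color $c$ let $\chi_c$ be the given coloring with $\chi_c(0)=c$ (one exists for every $c$ since the $r$ colorings differ only at $0$); each $\chi_c$ avoids nontrivial $k$-APs. Color $\mathbb{N}$ by the natural base-$m$ generalization of the recursive coloring described before the theorem: letting $v_m(\ell)$ denote the largest exponent $v$ with $m^v\mid\ell$, give $\ell$ the color $\chi\big((\ell/m^{v_m(\ell)})\bmod m\big)$, and restrict to $[n]$. The heart of the proof is the claim that a $k$-AP $a,a+d,\dots,a+(k-1)d$ with $d\ge1$ is monochromatic if and only if $v_m(a)<v_m(d)$. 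The ``if'' direction is a short computation: if $v_m(a)=f<v_m(d)$ then each term has $v_m(a+id)=f$ and $(a+id)/m^f\equiv a/m^f\pmod m$, so every term receives the single color $\chi(a/m^f\bmod m)$. For ``only if'', if $v_m(a)\ge v_m(d)=e$ one first divides $a$ and $d$ by $m^e$, which changes the color of no term, reducing to the case $m\nmid d$; then the AP reduces mod $m$ to a \emph{nontrivial} $k$-AP in $\mathbb{Z}_m$, and if the $\mathbb{N}$-AP were monochromatic of color $c$ then this reduced AP would be monochromatic of color $c$ under $\chi_c$ — its terms that avoid $0\bmod m$ are colored by $\chi$, which agrees with $\chi_c$ off $0$, and its terms hitting $0\bmod m$ are colored $c$ in the recursive coloring by assumption and $\chi_c(0)=c$ under $\chi_c$ — contradicting that $\chi_c$ has no nontrivial monochromatic $k$-AP.

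Given the characterization, I would finish by counting the pairs $(a,d)$ with $d\ge1$, $a+(k-1)d\le n$, and $m^{v_m(d)}\nmid a$. For each fixed $d$ the number of admissible $a\in[1,n-(k-1)d]$ is $\big(1-m^{-v_m(d)}\big)(n-(k-1)d)+O(1)$. Summing over $d$ — grouping the values of $d$ by $v_m(d)=e$ and using that $\{x:v_m(x)=e\}$ has density $m^{-e}(1-m^{-1})$ — the main term splits as $\frac{1}{2m(k-1)}n^2$ (the contribution of $m\mid d$ ignoring the constraint on $a$) minus a geometric series in $e$ that sums to $\frac{1}{2m(m+1)(k-1)}n^2$, giving $\frac{1}{2m(k-1)}n^2-\frac{1}{2m(m+1)(k-1)}n^2+O(n)=\frac{1}{2(m+1)(k-1)}n^2+O(n)$, as claimed. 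Constant progressions and rounding contribute only $O(n)$.

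The step I expect to be the main obstacle is the ``only if'' half of the characterization, chiefly because $m$ need not be prime and so $v_m$ is not an honest valuation; the argument uses only the two facts that $v_m(x+y)=\min\{v_m(x),v_m(y)\}$ whenever $v_m(x)\ne v_m(y)$ and that $v_m(id)\ge v_m(d)$, both valid for arbitrary $m$, but one must phrase the reduction to $m\nmid d$ and the $v_m(a+id)=f$ computation in terms of these rather than any multiplicativity of $v_m$. The only other nonroutine point is keeping the error in the final double sum at $O(n)$, which is straightforward since the per-$d$ error is $O(1)$ and the series in $e$ converges geometrically.
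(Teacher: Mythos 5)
Your proof is correct, and for the second (main) statement it takes a recognizably different route from the paper's. The paper decomposes $[n]$ by residue class mod $m$: the $m-1$ nonzero classes are solidly colored and the class of $0$ is a scaled copy of the whole coloring, which yields the recurrence $F(n)=F(n/m)+(m-1)\tfrac{1}{2(k-1)}(n/m)^2+O(n)$; iterating produces the geometric series $\sum_{i\ge1}m^{-2i}$. You instead prove a closed-form characterization --- the $k$-AP $a,a+d,\dots,a+(k-1)d$ is monochromatic if and only if $v_m(a)<v_m(d)$ --- and count the pairs $(a,d)$ directly, grouping $d$ by $v_m(d)$; the same geometric series reappears as $\sum_{e\ge1}m^{-2e}$. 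These are really the recursive and the fully ``unrolled'' versions of one argument, but yours buys two things. First, it makes explicit the step the paper dispatches with ``as noted above'': a monochromatic AP of color $c$ with $m\nmid d$ reduces to a nontrivial monochromatic $k$-AP in $\mathbb{Z}_m$ under the particular coloring $\chi_c$ with $\chi_c(0)=c$, which is precisely where the hypothesis of $r$ colorings differing only at $0$ enters. Second, it records the two valuation identities that keep the argument honest when $m$ is composite. The price is a slightly more delicate error analysis --- the per-$e$ errors of size $O(n)$ must be damped by the factor $m^{-e}$ before summing over the roughly $\log_m n$ relevant values of $e$ --- which you correctly observe stays at $O(n)$. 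The first statement is handled in essentially the same way in both proofs.
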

\begin{proof}
As noted above, in both cases all of the resulting monochromatic arithmetic progressions must lie in the same residue classes mod $m$.

In the first case, we color according to the residue class in the coloring of $Z_m$.  In particular the residue completely determines the color and so we can simply count our coloring by counting what happens on each of the $m$ residue classes.  Each residue consists of a solid coloring of length ${n\over m}+O(1)$ and we already know how to count the number of progressions in solid colorings.  This establishes the first case.

In the second case we unroll the coloring.  Namely for $i=1,2,\ldots,r$ let $C_i$ be the nonzero elements of $\mathbb Z_m$ colored with color $i$.  For each $\ell$ we find the first nonzero digit in the base $m$ expansion of $\ell$ and color $\ell$ the $i$th color if the digit is in $C_i$.   Initially when we subdivide by residue classes then as before $m-1$ of the residue classes will be colored solid.  The residue class for $0$ will look like the unrolling for the coloring of ${n\over m}+O(1)$.

If we let $F(n)$ be the number of monochromatic $k$ term arithmetic progressions then the above analysis shows that
\[
F(n)=F\bigg({n\over m}\bigg)+(m-1){1\over 2(k-1)}\bigg({n\over m}\bigg)^2+O(n).
\]
Iteratively applying this we have
\begin{align*}
F(n)&={(m-1)n^2\over 2(k-1)}\sum_{i\ge 1}\bigg({1\over m^2}\bigg)^i+O(n)\\
&= {(m-1)n^2\over 2(k-1)}\cdot{1\over m^2-1}+O(n)\\
&= {1\over 2(m+1)(k-1)}n^2+O(n). \qedhere
\end{align*}
\end{proof}

\subsection*{Using residues to color}
To apply Theorem~\ref{thm:unroll} we must find a coloring of $\mathbb{Z}_m$ that only contains trivial arithmetic progressions of length $k$.  Furthermore, given a $k$ we want $m$ to be as large as possible.

A close examination of the coloring of $\mathbb{Z}_{11}$ reveals that the nonzero elements colored white, $\{1,3,4,5,9\}$, are precisely the nonzero quadratic residues of $\mathbb{Z}_{11}$, i.e., values $r\in\mathbb{Z}_{11}$ for which there is some $x$ satisfying $x^2\equiv r\pmod{11}$.  

This suggests that we can look at colorings of $\mathbb{Z}_p$ where $p$ is a prime and where we color the quadratic residues white and the non-residues black.  This has two large advantages.
\begin{itemize}
\item Suppose that $a,a+d,\ldots,a+(k-1)d$ is a nontrivial monochromatic $k$-AP, i.e., $d\not\equiv 0\pmod p$.  Then we can multiply each term by $d^{-1}$ to get the arithmetic progression $ad^{-1}, ad^{-1}+1,\ldots,ad^{-1}+(k-1)$ consisting of $k$ consecutive elements of $\mathbb{Z}_p$.  Further since we started with a sequence which consisted either of only residues or  non-residues the resulting sequence must also consist of only residues or non-residues.  (This is because if $d^{-1}$ is a quadratic residue then multiplication by $d^{-1}$ sends residues to residues and non-residues to non-residues; a similar situation occurs if $d^{-1}$ is not a quadratic residue.)

This shows that to determine the length of the longest nontrivial arithmetic progression in a coloring by quadratic residues it suffices to find the longest run of residues or non-residues.  This is a tremendous speedup and makes large computer searches possible.

\item It is known that for $\mathbb{Z}_p$ that there are no long runs of consecutive residues or non-residues.  In particular, Burgess \cite{Burg} showed that the maximum number of consecutive quadratic residues or non-residues for a prime $p$ is $O\big(p^{1/4}(\log p)^{3/2}\big)$.

(The result of Burgess is not strong enough to guarantee the type of colorings we need for large $k$.  If we compare the random bound with Theorem~\ref{thm:unroll} then we need the length of the longest monochromatic progression in $\mathbb{Z}_p$ to be $\le \log_2(p)$ to yield a coloring which is better than random for some $k$.  On the other hand the result of Burgess is a general bound and does not preclude the possibility that for some (possibly many) primes $p$ the length might be smaller than $\log_2(p)$.)
\end{itemize}

When we want to use more than two colors we can use higher order residues in place of quadratic residues to look for colorings which avoid nontrivial $k$-APs.  In general, suppose $p$ is a prime, so that $\mathbb{Z}_p^*$ (the invertible elements of $\mathbb{Z}_p$) is a group of order $p-1$.  Further if $r\big|(p-1)$ then there is a unique subgroup of index $r$, namely
\[
S=\{x^r\mid x\in\mathbb{Z}_p,x\ne0\}.
\]
If there is no nontrivial $k$-AP in $S\cup\{0\}$ then there cannot be a nontrivial $k$-AP in $yS\cup\{0\}$ for any $y\in\mathbb{Z}_p^*$ (i.e., if there were, multiply the progression by $y^{-1}$ to find one in $S\cup\{0\}$, a contradiction).  In particular by choosing $y_1=1,y_2,\ldots,y_r$ so that the collection of $y_iS$ form the cosets of $\mathbb{Z}_p^*/S$ we form a coloring of $\mathbb{Z}_p$ with $r$ colors which has no $k$-AP regardless of the color assignment of $0$.  We have now established the following.

\begin{theorem}\label{tr}
Suppose that $p$ is an odd prime and $r\big|(p-1)$.  If $\{x^r:x\in \mathbb{Z}_p\}$ contains no nontrivial $k$-AP then there is a coloring with $r$ colors where $0$ can be colored arbitrarily and which contains no nontrivial $k$-APs.
\end{theorem}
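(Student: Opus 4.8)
The plan is to take the coloring already sketched in the paragraph preceding the statement and verify it has the claimed property. Since $\mathbb{Z}_p^*$ is a cyclic group of order $p-1$ and $r\big|(p-1)$, the set $S=\{x^r:x\in\mathbb{Z}_p^*\}$ is a subgroup of index exactly $r$ (the $r$th power map on a cyclic group of order $p-1$ has image of size $(p-1)/\gcd(p-1,r)=(p-1)/r$). Choose coset representatives $y_1=1,y_2,\ldots,y_r$ so that $\mathbb{Z}_p^*=y_1S\cup y_2S\cup\cdots\cup y_rS$ is a partition into cosets. Color every element of $y_iS$ with color $i$, and color $0$ with any color whatsoever. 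Note that $\{x^r:x\in\mathbb{Z}_p\}=S\cup\{0\}$ since $0^r=0$, so the hypothesis is precisely that $S\cup\{0\}$ contains no nontrivial $k$-AP.

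Next I would show this coloring has no nontrivial monochromatic $k$-AP. Suppose for contradiction that $a,a+d,\ldots,a+(k-1)d$ is monochromatic of color $i$ with $d\not\equiv0\pmod p$. Every nonzero term lies in $y_iS$. Multiply each term by $y_i^{-1}$: since $\mathbb{Z}_p$ is a field, multiplication by the unit $y_i^{-1}$ is an additive automorphism, so it sends the progression to $y_i^{-1}a,\ y_i^{-1}a+y_i^{-1}d,\ \ldots$, again an arithmetic progression, with common difference $y_i^{-1}d$, which is still nonzero because a product of nonzero field elements is nonzero. Moreover multiplication by $y_i^{-1}$ carries $y_iS$ into $S$ and fixes $0$, so every term of the new progression lies in $S\cup\{0\}$. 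This exhibits a nontrivial $k$-AP inside $\{x^r:x\in\mathbb{Z}_p\}$, contradicting the hypothesis. Hence no such progression exists, and since the color of $0$ was never used in the argument, the conclusion holds for any coloring of $0$.

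The only points requiring care — and the closest thing to an obstacle — are bookkeeping rather than genuine difficulty: first, that $S$ has index exactly $r$, which uses that $\mathbb{Z}_p^*$ is cyclic together with $r\big|(p-1)$; and second, that the scaling step both preserves nontriviality and keeps the entire progression, including a possible occurrence of the element $0$, inside $S\cup\{0\}$. Both follow from $\mathbb{Z}_p$ being a field. This is exactly the reduction already carried out for quadratic residues in the first bullet point before the statement; the generalization from squares to $r$th powers is immediate once one has the coset decomposition in hand.
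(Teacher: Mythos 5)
Your proposal is correct and follows essentially the same argument the paper gives (in the paragraph immediately preceding the theorem): color by the cosets $y_iS$ of the index-$r$ subgroup of $r$th powers, and pull any monochromatic nontrivial $k$-AP back into $S\cup\{0\}$ by multiplying by $y_i^{-1}$, contradicting the hypothesis. Your added bookkeeping about the index of $S$ and the preservation of nontriviality under scaling is a faithful elaboration of the same reduction, not a different route.
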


In some cases we can combine two good colorings to form a larger coloring, as shown in the following theorem.

\begin{theorem}\label{tensor}
For $i=1,2$, let $\mathcal{C}_i$ be a coloring of $\mathbb{Z}_{m_i}$ using $r_i$ colors where $0$ can be colored arbitrarily and containing no nontrivial $k$-APs.  Then there exists a coloring $\mathcal{C}$ of $\mathbb{Z}_{m_1m_2}$ using $r_1r_2$ colors where $0$ can be colored arbitrarily and containing no nontrivial $k$-APs.
\end{theorem}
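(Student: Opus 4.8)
The plan is to let $\mathcal{C}$ be the product (``tensor'') coloring. By the Chinese Remainder Theorem---using here that $\gcd(m_1,m_2)=1$, which holds in all the intended applications, where the $m_i$ are distinct primes supplied by Theorem~\ref{tr}---identify $\mathbb{Z}_{m_1m_2}$ with $\mathbb{Z}_{m_1}\times\mathbb{Z}_{m_2}$ via $x\mapsto(\pi_1(x),\pi_2(x))$, where $\pi_i$ denotes reduction modulo $m_i$. Color each $x$ by the ordered pair
\[
\mathcal{C}(x)=\bigl(\mathcal{C}_1(\pi_1(x)),\,\mathcal{C}_2(\pi_2(x))\bigr),
\]
so that $\mathcal{C}$ uses $r_1r_2$ colors.

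First I would check that $\mathcal{C}$ has no nontrivial monochromatic $k$-AP. The key point is that each $\pi_i$ carries a $k$-AP $a,a+d,\dots,a+(k-1)d$ of $\mathbb{Z}_{m_1m_2}$ to the $k$-AP $\pi_i(a),\pi_i(a)+\pi_i(d),\dots$ of $\mathbb{Z}_{m_i}$, and if the former is $\mathcal{C}$-monochromatic then the latter is $\mathcal{C}_i$-monochromatic, since two points of equal $\mathcal{C}$-color agree in their $i$th coordinate. As $\mathcal{C}_i$ has no nontrivial monochromatic $k$-AP, this forces $\pi_i(d)\equiv 0$, i.e.\ $m_i\mid d$, for $i=1,2$; since $\gcd(m_1,m_2)=1$ we get $m_1m_2\mid d$, so the progression was trivial.

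Next I would verify that $0$ may be recolored arbitrarily. Fix colors $(c_1,c_2)$, recolor $0\in\mathbb{Z}_{m_1m_2}$ accordingly to obtain a coloring $\mathcal{C}'$, and suppose $\mathcal{C}'$ has a nontrivial monochromatic $k$-AP $a,a+d,\dots,a+(k-1)d$. If $0$ is not one of its terms then it is already $\mathcal{C}$-monochromatic, contradicting the previous step; so $0$ occurs among the terms. Let $\mathcal{C}_i'$ be $\mathcal{C}_i$ with $0\in\mathbb{Z}_{m_i}$ recolored to $c_i$, a recoloring the hypothesis on $\mathcal{C}_i$ permits. A short check---the only mildly delicate point being terms of the progression that are nonzero in $\mathbb{Z}_{m_1m_2}$ but reduce to $0$ in some $\mathbb{Z}_{m_i}$---shows that the $\pi_i$-projection of the progression is monochromatic for $\mathcal{C}_i'$, hence trivial, so again $m_i\mid d$ for $i=1,2$ and $\gcd(m_1,m_2)=1$ gives $m_1m_2\mid d$, a contradiction.

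I do not expect a deep obstacle: the whole argument is the product construction followed by two applications of the ``project a progression, conclude it is trivial, pull the conclusion back'' idea. The points that need care are (i) that a monochromatic progression for $\mathcal{C}$ or $\mathcal{C}'$ really does project to a monochromatic progression for $\mathcal{C}_i$ or $\mathcal{C}_i'$, even when the projected progression has repeated entries---harmless, since all one extracts is the single relation $\pi_i(d)\equiv 0$---and (ii) the reduce-to-zero subtlety noted above. The only input not explicit in the statement is the coprimality $\gcd(m_1,m_2)=1$; it is genuinely needed (if $m_1$ and $m_2$ share a factor, a step equal to $\operatorname{lcm}(m_1,m_2)$ yields a surviving nontrivial monochromatic progression), and it holds in every intended application, so I would include it among the hypotheses.
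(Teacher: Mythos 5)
There is a genuine gap: you have added the hypothesis $\gcd(m_1,m_2)=1$, which is not in the statement of the theorem, and your claim that ``it holds in every intended application'' is contradicted by the paper's own table, where the theorem is applied with $m_1=m_2=139$, with $m_1=m_2=617$, and with $m_1=m_2=1069$ (the entries $139^2$, $617^2$, $1069^2$). You are right that \emph{your} construction genuinely requires coprimality --- the step $d=\operatorname{lcm}(m_1,m_2)$ kills the pair-of-reductions coloring when $\gcd(m_1,m_2)>1$ --- but the correct conclusion is not to add the hypothesis; it is that the Chinese Remainder Theorem is the wrong decomposition. So what you have proved is a strictly weaker statement that does not support the paper's main use of the result.

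The paper's proof works for arbitrary $m_1,m_2$ by writing each $z\in\{0,1,\ldots,m_1m_2-1\}$ in mixed-radix form $z=xm_2+y$ with $0\le x<m_1$, $0\le y<m_2$, and coloring $z$ by $\bigl(c_1(x),c_2(y)\bigr)$. The second coordinate $y$ is still reduction mod $m_2$, so a monochromatic $k$-AP projects to a monochromatic $k$-AP in $\mathcal{C}_2$ and hence has all $y_i$ equal, exactly as in your argument. The first coordinate $x=\lfloor z/m_2\rfloor$ is \emph{not} a homomorphism, so one cannot simply project; instead the paper uses second differences: once the $y_i$ are equal, $0\equiv z_{i-1}-2z_i+z_{i+1}\equiv m_2(x_{i-1}-2x_i+x_{i+1})\pmod{m_1m_2}$ gives $x_{i-1}-2x_i+x_{i+1}\equiv 0\pmod{m_1}$, so the $x_i$ form a chain of glued $3$-APs in $\mathbb{Z}_{m_1}$, which is again a monochromatic $k$-AP for $\mathcal{C}_1$ and hence constant. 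This is the idea your proposal is missing: replacing ``project the whole progression'' by ``recover an arithmetic progression in the quotient digit via second differences,'' which is what removes the coprimality requirement. Your treatment of the recoloring of $0$ is fine in spirit (varying $c_1(0)$ and $c_2(0)$ realizes every color on $0\in\mathbb{Z}_{m_1m_2}$, and the argument never uses what those colors are), and the paper handles it the same way.
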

\begin{proof}
We show how to color $\mathbb{Z}_{m_1m_2}$ using the colors $(c_1,c_2)$ where $c_1$ is a color from $\mathcal{C}_1$ and $c_2$ is a color from $\mathcal{C}_2$.  Given $0\le z<m_1m_2$ it can uniquely be written as $x m_2+y$ where $0\le x<m_1$ and $0\le y<m_2$.  This gives a map from $\mathbb{Z}_{m_1m_2}\to \mathbb{Z}_{m_1}\times\mathbb{Z}_{m_2}$ with the following property: If $z_1\mapsto(x_1,y_1)$ and $z_2\mapsto(x_2,y_2)$ then $z_1+z_2\mapsto(x',y_1+y_2\pmod{m_2})$.

The coloring is now defined by the following:  Given $z\in\mathbb{Z}_{m_1m_2}$ we color it as $\big(c_1(x),c_2(y)\big)$ where $c_1(x)$ is the color of $x$ in $\mathcal{C}_1$ and $c_2(y)$ is the color of $y$ in $\mathcal{C}_2$.

It remains to show that this has no nontrivial monochromatic $k$-APs.  Suppose that $z_1,z_2,\ldots,z_k$ were a monochromatic $k$-AP where $z_i\mapsto(x_i,y_i)$.  Then $y_1,y_2,\ldots,y_k$ is a monochromatic $k$-AP in $\mathcal{C}_2$.  But the only way this is possible is if $y_1=y_2=\dots=y_k$.

We can think of a $k$-AP as found by sewing together several $3$-APs.  So we have for $2\le i\le k-1$ (with what we have about the equality of the $y_i$)
\[
0\equiv z_{i-1}-2z_i+z_{i+1}\equiv m_2(x_{i-1}-2x_i+x_{i+1})\pmod{m_1m_2};
\]
which implies
\[
x_{i-1}+x_{i+1}\equiv 2x_i\pmod{m_1}.
\]
This shows that we now have a sequence of $3$-APs glued together in $\mathbb{Z}_{m_1}$, i.e., $x_1,x_2,\ldots,x_k$ is a monochromatic $k$-AP in $\mathbb{Z}_{m_1}$.  But the only way this is possible is if $x_1=x_2\dots=x_k$.

We now conclude that $z_1=z_2=\cdots=z_k$, i.e., there are no nontrivial monochromatic $k$-APs.  Finally we note that our argument does not depend on how we choose to color $0$.
\end{proof}

By computer search we can now find several colorings by either using residues (as in Theorem~\ref{tr}) or combinations of colorings from residues (as in Theorem~\ref{tensor}).  The results of the search are shown in Table~\ref{tab:2}.  We indicate both the $m$ (those colored red are using Theorem~\ref{tensor}) and also compare how well this coloring does compared to the random bound (see Lemma~\ref{lem:random}) expressed as a percentage.  Since each of these colorings allows $0$ to be arbitrary we will use the stronger form of Theorem~\ref{thm:unroll} so that the comparison with random can be found by computing ${r^{k-1}\over m+1}$.

\begin{table}
\centering
\begin{tabular}{|c||c|c|c|c|c|c|}\hline
\rr{m}{} & $r=2$ & $r=3$ &$r=4$ &$r=5$ & $r=6$ & $r=7$\\ \hline \hline
$k=3$& & & \rr{37}{42.11}&  &\rr{103}{34.95}&\\ \hline
$k=4$& \rr{11}{66.67}&\rr{97}{27.55}&\rr{349}{18.29}&\rr{751}{16.62}&\rr{3259}{6.63}&\rr{1933}{17.74}\\ \hline
$k=5$&\rr{37}{42.11}&\rr{241}{33.47}&\rr{2609}{9.81}&\rr{6011}{10.40}&\rr{14173}{9.14}&\rr{30493}{7.87}\\ \hline
$k=6$&\rr{139}{22.85}&\rr{1777}{13.67}&\rr{\textcolor{red}{139^2}}{5.30}&\rr{49391}{6.32}&\rr{\textcolor{red}{139\cdot1777}}{3.15}&\rr{317969}{5.29}\\ \hline
$k=7$&\rr{617}{10.36}&\rr{7309}{9.87}&\rr{\textcolor{red}{617^2}}{1.08}&\rr{230281}{6.78}&\rr{\textcolor{red}{617\cdot7309}}{1.03}& \\ \hline
$k=8$&\rr{1069}{11.96}&\rr{34057}{6.42}&\rr{\textcolor{red}{1069^2}}{1.43}&&&\\ \hline
$k=9$&\rr{3389}{7.55}&\rr{116593}{5.63}&&&&\\ \hline
$k=10$&\rr{11497}{4.45}&\rr{463747}{4.24}&&&&\\ \hline
$k=11$&\rr{17863}{5.73}&&&&&\\ \hline
$k=12$&\rr{58013}{3.53}&&&&&\\ \hline
$k=13$&\rr{136859}{2.99}&&&&&\\ \hline
$k=14$&\rr{239873}{3.41}&&&&&\\ \hline
$k=15$&\rr{608789}{2.69}&&&&&\\ \hline
$k=16$&\rr{1091339}{3.00}&&&&&\\ \hline
\end{tabular}
\caption{Best known $m$ for which there exists a coloring of $Z_m$ with $r$ colors where $0$ can be colored arbitrarily and containing no $k$-APs (also the percentage of the resulting coloring versus random colorings).}
\label{tab:2}
\end{table}

\subsection*{Open problems and variations}
By using residues we have found colorings of $\mathbb{Z}_m$ which when unrolled provide ways to color $[n]$ which are superior to random colorings.  However there is no reason why we must  use residues for our colorings.  As an example if we work with $r=3$ then $-1$, $0$ and $1$ are always cubic residues and so we cannot avoid $3$-APs.  On the other hand there are colorings of $\mathbb{Z}_{12}$ using three colors (shown in Figure~\ref{fig:threecolors}) which by Theorem~\ref{thm:unroll} when unrolled have ${1\over48}n^2+O(n)$ monochromatic $3$-APs.  These colorings have $75\%$ of what we would expect for a random coloring.  We still have a lot to learn about ways to color $\mathbb{Z}_m$ beyond residues which give efficient unrollings.

\begin{figure}
\centering
\includegraphics[scale=0.8]{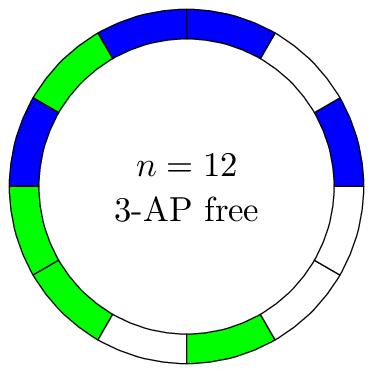}\hfil
\includegraphics[scale=0.8]{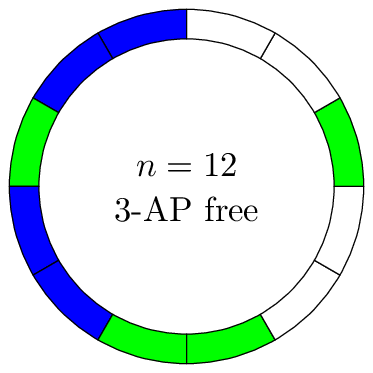}\hfil
\includegraphics[scale=0.8]{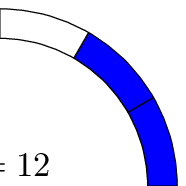}\hfil
\includegraphics[scale=0.8]{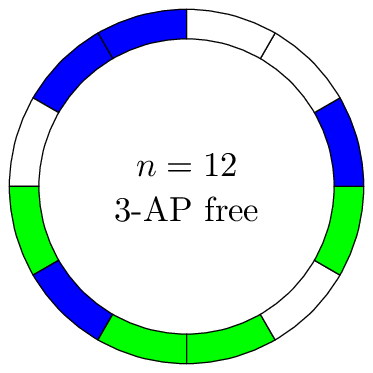}\hfil
\includegraphics[scale=0.8]{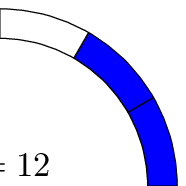}
\caption{Colorings of $Z_{12}$ with no nontrivial monochromatic $3$-AP using three colors.}
\label{fig:threecolors}
\end{figure}

There remains a large amount of work to be done in answering the original question of determining the largest $\gamma$ so that for any coloring using $r$ colors there will be $\big(\gamma+o(1)\big)n^2$ monochromatic $k$-APs.  We have mostly focused on finding colorings which beat random, which give upper bounds for $\gamma$ but we have not even touched on the subject of lower bounds or attempted to show that any of the colorings that we have found are optimal.  For the simplest nontrivial case when $r=2$ and $k=3$ the best known lower bound is due to Parrilo, Robertson and Saracino \cite{PRS} who showed that $\gamma$ lies in the interval
\[
{1675\over 32768}n^2+o(n^2)\approx 0.05111n^2<0.05338n^2\approx {117\over 2192}n^2+O(n).
\]

In general we have provided some evidence in support of showing that we can always beat the random bound for minimizing the number of monochromatic $k$-APs.  While this evidence is compelling we must be careful not to be misled by these small cases.  We will need a more general machinery to establish the following.

\begin{conjecture}
For each $k\ge 3$ and $r\ge 2$ there is a coloring of $[n]$ with $r$ colors so that there are $\delta n^2+O(n)$ monochromatic progressions of length $k$ and where $\delta<{1\over 2(k-1)r^{k-1}}$.
\end{conjecture}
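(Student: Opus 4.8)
The plan is to obtain the coloring of $[n]$ by unrolling a coloring of a cyclic group, so that the conjecture reduces to the following purely arithmetic statement: \emph{for every $k\ge 3$ and $r\ge 2$ there is an integer $m>r^{k-1}$ that carries an $r$-coloring of $\mathbb{Z}_m$ with no nontrivial monochromatic $k$-AP.} Granting this, the first form of Theorem~\ref{thm:unroll} produces an $r$-coloring of $[n]$ with $\frac{1}{2m(k-1)}n^2+O(n)$ monochromatic $k$-APs, and $\frac{1}{2m(k-1)}<\frac{1}{2(k-1)r^{k-1}}$ because $m>r^{k-1}$, which is exactly the conjectured bound with $\delta=\frac{1}{2m(k-1)}$. (If the coloring of $\mathbb{Z}_m$ also permits $0$ to be colored arbitrarily then the second form of Theorem~\ref{thm:unroll} gives the same conclusion already for $m\ge r^{k-1}$, and Theorem~\ref{tensor} then lets us reduce to prime $r$ by multiplying such colorings together.) Everything thus comes down to producing large $k$-AP-avoiding colorings of $\mathbb{Z}_m$.

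To build these I would use the residue construction of Theorem~\ref{tr}. By the discussion preceding that theorem, for a prime $p\equiv 1\pmod r$ the coloring of $\mathbb{Z}_p$ by cosets of the index-$r$ subgroup $S=\{x^r:x\in\mathbb{Z}_p\}$ has a nontrivial monochromatic $k$-AP exactly when some $k$ consecutive elements of $\mathbb{Z}_p$ all lie in a single coset; so --- up to a harmless adjustment concerning the color assigned to $0$ --- what is needed is a prime $p>r^{k-1}$ with $r\mid p-1$ whose longest run of consecutive elements in one coset of $S$ has length less than $k$. (For a few small pairs $(k,r)$ one must use the fixed-$0$ form of Theorem~\ref{thm:unroll} rather than the arbitrary-$0$ form, or fall back on the block colorings and the $\mathbb{Z}_{12}$ colorings of Figure~\ref{fig:threecolors} already discussed; this affects only finitely many cases and I would set it aside.)

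The step I expect to be the genuine obstacle is showing that such a prime exists, and it is delicate precisely because we are pinned to the critical range $p\in[r^{k-1},Cr^{k-1}]$. Heuristically the coset containing a nonzero element of $\mathbb{Z}_p$ behaves like an independent uniform choice among the $r$ cosets, so the longest run should have length about $\log_r p$, which for $p$ in this range is $(1+o(1))(k-1)$ --- essentially the threshold $k$ itself. A Poisson model for the number of runs of length $\ge k$ then predicts that this count has bounded expectation and vanishes for a positive proportion of primes $p$ in the window; for any such prime the desired coloring exists and the rest is routine. The hard part is doing this rigorously. The Burgess bound recalled earlier (\cite{Burg}), like pointwise character-sum estimates in general, only controls runs of length $p^{1/4+o(1)}$, which is far larger than $\log_r p$; and a naive first-moment count over the window loses a constant factor, since the window must contain $\gg r^{k-1}$ primes while each of them carries a run of length $\ge k$ with ``probability'' $\asymp p/r^{k-1}\asymp 1$. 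What appears to be required is honest concentration --- control of the higher moments of the relevant character sums on average over the modulus $p$, or a GRH-type hypothesis, or else some explicit family of colorings of $\mathbb{Z}_m$ for composite $m$ that circumvents the search for a good prime. Supplying such an input is, I believe, exactly the ``more general machinery'' the conjecture is asking for.
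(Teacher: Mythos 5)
There is a genuine gap here, but it is worth being precise about its nature: the statement you were asked to prove is stated in the paper as an open \emph{conjecture}, immediately preceded by the sentence ``We will need a more general machinery to establish the following.'' The paper supplies no proof, only the supporting evidence in Table~\ref{tab:2} for small $(k,r)$. So there is no paper argument to compare yours against, and your proposal should be judged on whether it closes the problem. It does not, and you say so yourself. Your reduction is correct and is exactly the route the authors have in mind: by the first form of Theorem~\ref{thm:unroll} it suffices to exhibit, for each $k$ and $r$, an $r$-coloring of some $\mathbb{Z}_m$ with $m>r^{k-1}$ and no nontrivial monochromatic $k$-AP (or $m\ge r^{k-1}$ with $0$ free, using the second form), and Theorem~\ref{tr} converts this, for prime $p\equiv 1\pmod r$, into the statement that no $k$ consecutive residues of $\mathbb{Z}_p$ lie in a single coset of the index-$r$ power subgroup. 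You also correctly flag the exceptional cases (e.g.\ $r=3$, $k=3$, where $-1,0,1$ are always cubic residues) that must be handled by non-residue colorings such as those of Figure~\ref{fig:threecolors}.

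The unclosed step is the one you identify: proving that for every $k$ and $r$ there exists a prime $p$ in the critical window just above $r^{k-1}$, with $r\mid p-1$, whose longest coset run has length $<k$. The Poisson heuristic you give (expected number of bad runs $\asymp p/r^{k-1}$, hence a positive proportion of good primes) is plausible but is not a proof; the Burgess bound of $O\big(p^{1/4}(\log p)^{3/2}\big)$ on run lengths is, as the paper itself notes, far too weak to reach the $\log_r p$ scale needed, and no known unconditional character-sum technology controls runs at logarithmic length for a specific or even a positive proportion of primes in a prescribed window. Until that input is supplied --- whether by averaging over the modulus, by GRH, or by an entirely different family of colorings of composite $m$ --- the conjecture remains open, and your write-up is an honest reduction of the conjecture to its hard kernel rather than a proof of it.
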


Besides considering ways to avoid $k$-APs one can also try to avoid other patterns.  For example, we can try to avoid $a,a+2d,a+3d,a+5d$ (which can be thought of as a $6$-AP with some terms punched out).  In this case the best known coloring with two colors for unrolling is found by a coloring of $\mathbb{Z}_{13}$ and is shown in Figure~\ref{fig:13} (as before the red bit can be arbitrary).  Similar calculations to Theorem~\ref{thm:unroll} will show that this unrolling has ${1\over140}n^2+O(n)$ monochromatic patterns (about $57.14\%$ of what we would expect of a random coloring).

\begin{figure}[h]
\centering
\includegraphics[scale=1]{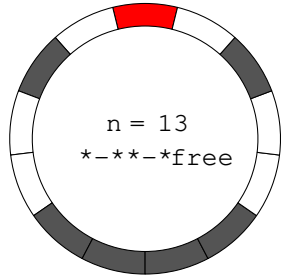}
\caption{A coloring of $Z_{13}$ with no nontrivial monochromatic {\tt*-**-*}; the red bit can be either color.}
\label{fig:13}
\end{figure}

There are still many interesting problems rolled up in this corner of combinatorics and we look forward to seeing more work unroll in this direction.

\eject

\end{document}